\date{\empty}
\thanks{}
\renewcommand{\uppercasenonmath}[1]{}
\numberwithin{equation}{section} \theoremstyle{plain}
\newtheorem*{thm*}{Main Theorem}
\newtheorem{theorem}{Theorem}[section]
\newtheorem{corollary}[theorem]{Corollary}
\newtheorem*{corollary*}{Corollary}
\newtheorem*{claim*}{Claim}
\newtheorem{lemma}[theorem]{Lemma}
\newtheorem*{lemma*}{Lemma}
\newtheorem*{proposition*}{Proposition}
\newtheorem*{remark*}{Remark}
\newtheorem*{example*}{Example}
\newtheorem*{question*}{Question}
\newtheorem{definition}[theorem]{Definition}
\newtheorem*{definition*}{Definition}
\newtheorem*{acknowledgements*}{ACKNOWLEDGEMENTS}
\begin{document}

\begin{center}
{\large  \bf Representations of the Drazin inverse involving idempotents in a ring}\\
\vspace{0.8cm} {\small \bf Huihui Zhu, Jianlong Chen\footnote{Corresponding author.
Department of Mathematics, Southeast University, Nanjing 210096, China.
Email: jlchen@seu.edu.cn}}
\end{center}

\bigskip
{ \bf  Abstract:}  \leftskip0truemm\rightskip0truemm We present some formulae for the Drazin inverse of difference and product of idempotents in a ring. A number of results of bounded linear operators in Banach spaces are extended to the ring case.
\\{ \textbf{Keywords:}} Idempotent, Drazin inverse, Spectral idempotent, involution
\\\noindent { \textbf{2010 Mathematics Subject Classification:}}  15A09, 16U99
 \bigskip

\section { \bf Introduction}
Let $R$ be an associative ring with unity $1\neq 0$. The symbols $R^{-1}$, $R^D$ and $R^{\rm nil}$ denote the sets of invertible, Drazin invertible and nilpotent elements of $R$, respectively. The commutant of an element $a\in R$ is defined as ${\rm comm}(a)=\{x\in R:xa=ax \}$.

An element $a\in R$ is said to have a  Drazin inverse [6] if there exists $b\in R$ such that
\begin{center}
$b\in {\rm comm}(a)$, $bab=b$, $a-a^2b\in R^{\rm nil}$.
\end{center}
The element $b\in R$ above is unique and denoted by $a^D$. The nilpotency index of $a-a^2b$ is called the Drazin index of $a$, denoted by ${\rm ind}(a)$. If ${\rm ind}(a)=1$, then $a$ is group invertible and the group inverse of $a$ is denoted by $a^\#$. By $a^\pi=1-aa^D$ we mean the spectral idempotent of $a$. It is well known that $a\in R^D$ implies that $a^2\in R^D$ and $(a^2)^D=(a^D)^2$.

Gro{\ss} and Trenkler [7] considered the nonsingularity of $p-q$ for general matrix projectors $p$ and $q$. Koliha and Rako\v{c}evi\'{c} [10] studied the invertibility of the sum $f+g$ when $f$ and $g$ are idempotents in a ring or bounded linear operators in Hilbert or Banach spaces, and proved that $f+g$ is invertible if and only if $f-g$ is invertible. Koliha and Rako\v{c}evi\'{c} [11] obtained the equivalent conditions for the invertibility of $f-g$ in a ring. They also gave applications to bounded linear operators in Banach and Hilbert spaces. Koliha, Rako\v{c}evi\'{c} and Stra\v{s}kraba [12] presented new results on the invertibility of the sum of projectors and obtained the formulae of invertibility of $p-q$ and $p+q$. The problems of Drazin inverse of difference and product of idempotents were studied by many researchers, such as[3,4,5,9,13].

Deng and  Wei [5] presented the formulae for the Drazin inverse of difference and product of idempotent bounded linear operators in Banach spaces. In this paper, we give a algebraic proof of the Drazin inverse of sum, difference and product of idempotents in a ring. Moreover, the formulae of the Drazin inverse involving idempotents are established. Hence, we extend the results in [4,5] to the ring case.

\section{\bf Some lemmas }
In what follows, $p$, $q$ always mean any two idempotents in a ring $R$. We state several known results in the form of lemmas without proofs.

\begin{lemma} $[2, {\rm Proposition}~3.1, {\rm Theorem}~3.3]$ Let $\sum=\{ p-q, 1-pq, p-pq, p-qp, p-pqp, 1-qp, q-pq, q-qp, p+q-pq\}$. If one of the elements in the $\sum$ is Drazin invertible, then all other elements in $\sum$ are Drazin invertible.
\end{lemma}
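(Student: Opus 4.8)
The statement is an ``all-or-nothing'' equivalence, so it suffices to view the nine elements as vertices of a graph, to join two of them by an edge whenever the equivalence $x\in R^D\Leftrightarrow y\in R^D$ can be proved, and to check that the graph is connected. The plan is to produce most edges mechanically from two standard transfer principles: Cline's formula, $ab\in R^D\Leftrightarrow ba\in R^D$ for any $a,b\in R$, and Jacobson's lemma, $1-ab\in R^D\Leftrightarrow 1-ba\in R^D$. Factoring the differences as products, $p-pq=p(1-q)$ and $p-qp=(1-q)p$ are Cline-conjugate, and taking $a=p(1-q)$, $b=p$ also links $p-pqp=p(1-q)p$ to them; similarly $q-pq=(1-p)q$ and $q-qp=q(1-p)$ are Cline-conjugate; and $1-pq$, $1-qp$ are tied together by Jacobson's lemma. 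This gives the three clusters $\{p-pq,\ p-qp,\ p-pqp\}$, $\{q-pq,\ q-qp\}$ and $\{1-pq,\ 1-qp\}$, with $p-q$ and $p+q-pq$ still isolated.

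The core of the argument is to bridge the clusters, and for this I would use the Peirce decomposition relative to an idempotent together with the Drazin inverse of a block-triangular element. For example, $1-qp=(1-p)+(1-q)p$, and since $(1-q)p\cdot(1-p)=0$, relative to the idempotent $1-p$ the element $1-qp$ is block triangular whose $(1-p)$-corner is the invertible $1-p$ and whose complementary corner is $p(1-q)p=p-pqp$; hence $1-qp\in R^D\Leftrightarrow p-pqp\in R^D$. The same device gives $1-pq\in R^D\Leftrightarrow q-qpq\in R^D$ (from $1-pq=(1-q)+(1-p)q$) and $p+q-pq\in R^D\Leftrightarrow (1-p)q(1-p)\in R^D$ (from $p+q-pq=p+(1-p)q$, where $p\cdot(1-p)q=0$). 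A last application of Cline's formula reabsorbs the new elements: $p-pqp\Leftrightarrow p-pq$, $q-qpq\Leftrightarrow q-qp$, and $(1-p)q(1-p)\Leftrightarrow q-qp$. After this, all eight elements other than $p-q$ lie in one equivalence class.

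Finally I would attach $p-q$ through its square. The element $(p-q)^2=p+q-pq-qp$ commutes with $p$, so it is the sum of $p(p-q)^2=p-pqp$ and $(1-p)(p-q)^2=(1-p)q(1-p)$, and these two summands annihilate each other on both sides. Additivity of the Drazin inverse on orthogonal summands then gives $(p-q)^2\in R^D$ iff both $p-pqp\in R^D$ and $(1-p)q(1-p)\in R^D$, and the standard equivalence $a\in R^D\Leftrightarrow a^2\in R^D$ turns the left side into $p-q\in R^D$. Since $p-pqp$ and $(1-p)q(1-p)\ (\Leftrightarrow q-qp)$ already belong to the common class, the conjunction collapses to a single condition and $p-q$ joins it, so the whole graph is connected.

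The Cline and Jacobson edges are routine bookkeeping; the main obstacle is to spot the correct one-sided factorizations $uv=0$ (or $vu=0$) that exhibit $1-pq$, $1-qp$ and $p+q-pq$ as block-triangular elements with an idempotent, hence invertible, corner, because these are exactly the bridges joining the otherwise separate clusters, and then to confirm that the triangular and orthogonal-sum Drazin formulas apply.
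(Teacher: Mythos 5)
The paper itself offers no proof to compare against: Lemma 2.1 is imported verbatim from [2, Proposition 3.1, Theorem 3.3], and Section 2 states its lemmas explicitly without proofs. Judged on its own terms, your connectivity argument is essentially correct and runs on the same fuel as the cited source, namely Cline's formula (Lemma 2.3) and Jacobson's lemma for Drazin inverses (Lemma 2.4). Your factorizations and Peirce computations all check out: the three clusters, the identities $1-qp=(1-p)+(1-q)p$, $1-pq=(1-q)+(1-p)q$ and $p+q-pq=p+(1-p)q$, the reabsorptions by Cline, the orthogonal splitting $(p-q)^2=(p-pqp)+(1-p)q(1-p)$, and the standard equivalence $a\in R^D\Leftrightarrow a^2\in R^D$. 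The resulting graph on the nine elements is indeed connected.

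The one step you must not treat as off-the-shelf is the backward direction of your block-triangular bridges. For a general triangular element relative to an idempotent, Drazin invertibility of the whole does \emph{not} imply Drazin invertibility of the diagonal corners; this already fails for ordinary invertibility (the standard unitary assembled from the unilateral shift and its adjoint is block triangular with non-invertible diagonal). Your bridges are saved by the fact that one corner is the identity of its corner ring, but that needs an argument: writing the Drazin inverse $N$ of the triangular element $M$ (with corners $a=pMp$, $b=(1-p)M(1-p)$, $pM(1-p)=0$) in Peirce components, commutativity gives $a\,(pN(1-p))=(pN(1-p))\,b$ while $M^{k+1}N=M^k$ gives $a^{k+1}(pN(1-p))=0$, hence $(pN(1-p))b^{k+1}=0$ and so $pN(1-p)=0$ once one corner is invertible; the remaining diagonal component of $N$ is then checked to be the Drazin inverse of the other corner. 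Alternatively you can bypass triangular elements entirely: a second application of Jacobson's lemma gives $1-pq\in R^D\Leftrightarrow 1-pqp\in R^D$ (take $a=pq$, $b=p$) and $p+q-pq=1-(1-p)(1-q)\in R^D\Leftrightarrow 1-(1-p)(1-q)(1-p)\in R^D$, and each of $1-pqp=(1-p)+(p-pqp)$ and $1-(1-p)(1-q)(1-p)=p+(1-p)q(1-p)$ is an orthogonal block-\emph{diagonal} sum commuting with $p$, for which both directions of the transfer follow at once from [6, Corollary 1] and the double-commutativity $pM=Mp\Rightarrow pM^D=M^Dp$ that the paper already invokes. The same remark covers the ``only if'' half of your final orthogonal-summand step for $(p-q)^2$. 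With that justification supplied, your proof is complete.
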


\begin{lemma}$[2, {\rm Theorem}~3.4]$ The following statements are equivalent:\\
$(1)$ $pq\in R^D,$\\
$(2)$ $1-p-q\in R^D$,\\
$(3)$ $(1-p)(1-q)\in R^D$.
\end{lemma}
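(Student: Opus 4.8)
The plan is to pass to the square of $1-p-q$ and split it into two orthogonal pieces. Put $w=1-p-q$. A direct expansion using $p^2=p$ and $q^2=q$ gives $w^2=1-p-q+pq+qp$, which I would regroup as $w^2=pq+(1-q)(1-p)$. The point of this grouping is orthogonality: since $pq(1-q)=0$ and $(1-p)pq=0$, one gets $pq\cdot(1-q)(1-p)=0$ and $(1-q)(1-p)\cdot pq=0$. (Symmetrically, $w^2=qp+(1-p)(1-q)$ is a second orthogonal splitting.)

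Next I would invoke the standard additive fact: if $ab=ba=0$, then $a+b\in R^D$ if and only if $a\in R^D$ and $b\in R^D$. The nontrivial direction is routine—each summand commutes with $s=a+b$, hence with $s^D$, and one then checks that $a(s^D)^2$ is the Drazin inverse of $a$. Applying this to $w^2=pq+(1-q)(1-p)$, together with the well-known equivalence $x\in R^D\Leftrightarrow x^2\in R^D$, yields $1-p-q\in R^D\Leftrightarrow\big(pq\in R^D\text{ and }(1-q)(1-p)\in R^D\big)$. Cline's formula $ab\in R^D\Leftrightarrow ba\in R^D$ then replaces $(1-q)(1-p)$ by $(1-p)(1-q)$ and $qp$ by $pq$, so at this stage the implications $(2)\Rightarrow(1)$ and $(2)\Rightarrow(3)$ are already in hand, and the whole lemma is reduced to the single equivalence $pq\in R^D\Leftrightarrow(1-p)(1-q)\in R^D$.

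For this remaining equivalence I would first note the symmetry $p\mapsto 1-p$, $q\mapsto 1-q$, which interchanges $pq$ with $(1-p)(1-q)$ and only changes the sign of $1-p-q$; hence it suffices to prove one implication, say $pq\in R^D\Rightarrow(1-p)(1-q)\in R^D$. By Cline this is equivalent to $pqp\in R^D\Rightarrow(1-p)(1-q)(1-p)\in R^D$, so I would pass to the Peirce decomposition of $q$ relative to $p$. Writing $A=pqp$, $B=pq(1-p)$, $C=(1-p)qp$ and $G=(1-p)(1-q)(1-p)=(1-p)-(1-p)q(1-p)$, the identity $q^2=q$ produces $BC=A-A^2$ and $CB=G-G^2$ together with the intertwiners $CA=GC$ and $AB=BG$; from these one extracts the useful relations $CA^kB=G^{k+1}-G^{k+2}$ for all $k\ge 0$.

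The main obstacle is exactly the passage from $A\in R^D$ to $G\in R^D$. Applying Cline's formula to the pair $B,C$ only gives $A-A^2\in R^D\Leftrightarrow G-G^2\in R^D$, and this is strictly stronger than what is available: $A-A^2=A(p-A)$ being Drazin invertible constrains $A$ near both $0$ and $p$, whereas $pq\in R^D$ controls $A=pqp$ only at $0$ (indeed $p-pqp$ lies in the other Drazin class, that of Lemma~\ref{} — the set $\sum$ of Lemma~1, which is not governed by our hypothesis). Thus the bare Cline argument fails, and the idempotency $q^2=q$ must be used essentially, through the intertwiners, to match precisely the $0$-components of $A$ and $G$. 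Concretely, I would decompose along the spectral idempotent $A^\pi=p-AA^D$: the relation $CA^kB=G^{k+1}-G^{k+2}$ shows that the nilpotent part $A^\pi A$ forces the corresponding part of $G$ to be nilpotent, while $CA^DB$ recovers the invertible part and allows one to assemble $G^D$ explicitly. This transfer of the spectral idempotent across $B$ and $C$—rather than any further new idea—is the delicate step, after which all three statements are equivalent.
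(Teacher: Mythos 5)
The paper itself offers no proof of this lemma; it is quoted from [2, Theorem 3.4] and the authors state explicitly that the lemmas of Section 2 are recorded without proofs, so your attempt must stand on its own. Its first two thirds do: the orthogonal splitting $(1-p-q)^2=pq+(1-q)(1-p)$ is correct, the additive fact for orthogonal summands is correct in both directions (for the converse, $a$ commutes with $s=a+b$ since $as=a^2=sa$, hence with $s^D$, and $a(s^D)^2$ is indeed $a^D$), and together with Cline's formula (Lemma 2.3) and the symmetry $p\mapsto 1-p$, $q\mapsto 1-q$ this correctly reduces the whole lemma to the single implication $pqp\in R^D\Rightarrow(1-p)(1-q)(1-p)\in R^D$. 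Your Peirce identities $BC=A-A^2$, $CB=G-G^2$, $CA=GC$, $AB=BG$, $CA^kB=G^{k+1}-G^{k+2}$ all check out, and you are right that feeding $B,C$ into Cline only controls $A-A^2$, which is not implied by $A\in R^D$.

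The difficulty is that the one step you yourself label ``the main obstacle'' is never actually performed, and the sketch you give for it does not work as stated. There is no decomposition of $G$ ``along $A^\pi$'': that idempotent lives in $pRp$ while $G$ lives in $(1-p)R(1-p)$, and the natural candidate for a transferred idempotent, $CA^DB$, is not idempotent --- using $BC=A-A^2$ one computes $(CA^DB)^2=C(A^D-AA^D)B$, which differs from $CA^DB$ unless $CAA^DB=0$. The identities $CA^kB=G^{k+1}-G^{k+2}$ do settle the case $A$ nilpotent (then $G^{n+1}=G^{n+2}$, so $G\in R^D$), but when $A$ has a nontrivial invertible part they give no handle on $G^D$, and no mechanism for ``assembling'' it is supplied. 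So there is a genuine gap precisely at the crux. It can be closed with tools already in the paper, bypassing the Peirce computation entirely: by Cline, $pq\in R^D$ iff $qpq\in R^D$ iff $(1-q)+qpq\in R^D$ (orthogonal sum with the idempotent $1-q$); since $(1-q)+qpq=1-[q(1-p)][(1-p)q]$, Lemma 2.4 converts this to $1-[(1-p)q][q(1-p)]=p+(1-p)(1-q)(1-p)\in R^D$, which by orthogonality and one more application of Cline is equivalent to $(1-p)(1-q)\in R^D$. That Jacobson-type switch, rather than a transfer of spectral idempotents across $B$ and $C$, is the missing idea.
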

\begin{lemma} Let $a, b\in R^D$. Then $(ba)^D=b((ab)^D)^2a$. If $ab=ba$, then $(ab)^D=b^Da^D=a^Db^D$.
\end{lemma}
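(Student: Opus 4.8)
The plan is to prove both identities by the same scheme: write down the candidate inverse explicitly and check the three defining properties of the Drazin inverse (commutation with the element, the equation $xyx=x$, and nilpotency of the associated ``remainder''), after which uniqueness of the Drazin inverse yields the formula. The only genuinely substantive hypothesis needed for the first identity is that $ab\in R^D$, so that $(ab)^D$ is defined; this is Cline's formula, and the conclusion then produces $ba\in R^D$ automatically.

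For the first identity, set $c=ab$ and $x=b(c^D)^2a$. First I would record the elementary consequences $c(c^D)^2=c^D=(c^D)^2c$ of the Drazin axioms $cc^D=c^Dc$ and $c^Dcc^D=c^D$. Using $ab=c$, these give $(ba)x=bc^Da=x(ba)$, so $x\in{\rm comm}(ba)$, and a similar reduction gives $x(ba)x=b(c^D)^2a=x$. For the nilpotency condition I would compute $ba-(ba)^2x=b(1-cc^D)a=bc^\pi a$ and then establish, by induction on $n\ge 1$, the identity $(bc^\pi a)^{n+1}=b(cc^\pi)^na$, the inductive step using $c^\pi ab=c^\pi c=cc^\pi$. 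Since $cc^\pi=c-c^2c^D$ is nilpotent by definition of $c^D$, some power $(cc^\pi)^m$ vanishes, whence $(bc^\pi a)^{m+1}=0$ and $bc^\pi a\in R^{\rm nil}$. This verifies all three conditions and gives $(ba)^D=b((ab)^D)^2a$.

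For the second identity, the enabling observation is that the Drazin inverse lies in the double commutant: anything commuting with $a$ commutes with $a^D$ [6]. Hence $ab=ba$ forces $b$ and $a^D$ to commute, and then forces $a^D$ to commute with $b^D$, so that $a,b,a^D,b^D$ are pairwise commuting and in particular $a^Db^D=b^Da^D$. Taking $x=a^Db^D$, the commutation $x(ab)=(ab)x$ and the identity $x(ab)x=(a^Daa^D)(b^Dbb^D)=a^Db^D=x$ follow by regrouping factors. The remaining, and main, task is to show that $ab-(ab)^2x=ab-a^2a^Db^2b^D$ is nilpotent. I would expand $ab$ via $1=aa^D+a^\pi$ and $1=bb^D+b^\pi$ to rewrite this remainder as $a^2a^D(bb^\pi)+(aa^\pi)b^2b^D+(aa^\pi)(bb^\pi)$. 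Since $a,b,a^D,b^D$ generate a commutative subring and $aa^\pi=a-a^2a^D$, $bb^\pi=b-b^2b^D$ are nilpotent, this element lies in the ideal generated by two commuting nilpotents; a binomial-expansion argument then shows that a high power vanishes. Thus $ab\in R^D$ with $(ab)^D=a^Db^D=b^Da^D$.

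The main obstacle in both parts is the third (nilpotency) condition rather than the first two, which are purely formal. In the first part the crux is spotting and proving the power formula $(bc^\pi a)^{n+1}=b(cc^\pi)^na$, which converts nilpotency of $bc^\pi a$ into the already-known nilpotency of $cc^\pi$. In the second part it is the bookkeeping that exhibits $ab-a^2a^Db^2b^D$ as a member of the nilradical of the commutative subring generated by $a,b,a^D,b^D$. The commutation relations furnished by the double-commutant property of the Drazin inverse are precisely what make all of these regroupings legitimate.
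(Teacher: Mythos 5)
Your proof is correct and complete. The paper states this lemma without proof (it is cited as a known result -- the first identity is Cline's formula), so there is no argument in the paper to compare against; your direct verification of the three Drazin axioms, with the key power identity $(bc^\pi a)^{n+1}=b(cc^\pi)^n a$ reducing nilpotency of the remainder to that of $cc^\pi$, is the standard and correct route. Your observation that the first identity really only requires $ab\in R^D$ (rather than $a,b\in R^D$, which in a general ring does not by itself guarantee $ab\in R^D$) is accurate and in fact sharpens the hypothesis as the lemma is stated. The second part is also sound: the double-commutant property forces $a,b,a^D,b^D$ to commute pairwise, and the remainder $a^2a^D(bb^\pi)+(aa^\pi)b^2b^D+(aa^\pi)(bb^\pi)$ is nilpotent because it lies in the ideal generated by the commuting nilpotents $aa^\pi$ and $bb^\pi$ inside a commutative subring.
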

\begin{lemma}$[1,{\rm Theorem~3.6}]$ Let $a, b\in R$. If $1-ab\in R^D$ with ${\rm ind}(1-ab)=k$, then $1-ba\in R^D$ with ${\rm ind}(1-ba)=k$ and
\begin{center}
$(1-ba)^D=1+b((1-ab)^D-(1-ab)^\pi r)a$,
\end{center}
where $r=\displaystyle{\sum_{i=0}^{k-1}(1-ab)^i}$.
\end{lemma}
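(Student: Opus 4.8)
The plan is to set $x = 1-ab$ and $y = 1-ba$, write $p = x^D$, $x^\pi = 1 - xp$ and $u = p - x^\pi r$ with $r = \sum_{i=0}^{k-1} x^i$, and then verify directly that the proposed element $c = 1 + bua$ satisfies the three defining conditions of the Drazin inverse of $y$. The backbone of every computation is the pair of intertwining relations $ay = xa$ and $bx = yb$, which follow at once from $a(1-ba) = (1-ab)a$ and $b(1-ab) = (1-ba)b$; iterating them gives $ay^n = x^n a$ and $bx^n = y^n b$. I will also record the standard identities for $p = x^D$: the elements $xp$ and $x^\pi$ are idempotent, $px^\pi = x^\pi p = 0$, $xp^2 = p$, and $x^2 p^2 = xp$; and, since ${\rm ind}(x) = k$, the nilpotency identity $x^k x^\pi = 0$ together with the telescoping relation $(1-x)r = 1 - x^k$.

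First I would check $c \in {\rm comm}(y)$. Because $u$ is a polynomial in $x$ and $p$, it commutes with $x$, so using $yb = bx$ and $ay = xa$ one finds $yc = y + b(xu)a = cy$; this step is routine. The substantive step is $cyc = c$. Expanding $c(yc)$ and repeatedly replacing $ab$ by $1-x$, which commutes with every function of $x$ and $p$, collapses the identity to $b\,\Phi\, a = 0$, where $\Phi = -1 - u + 2xu + xu^2 - x^2 u^2$ is a single function of $x$. Substituting $u = p - x^\pi r$ and using $px^\pi = 0$, $xp^2 = p$, $x^2 p^2 = xp$ reduces $\Phi$ to $x^\pi\big(-1 + r - 2xr + xr^2(1-x)\big)$; the telescoping relation then simplifies the bracket to $-x^k(r + x^k)$, so that $\Phi = -x^k x^\pi (r+x^k) = 0$ by $x^k x^\pi = 0$. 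I expect this computation to be the main obstacle: it is precisely where the shape of the correction term $x^\pi r$ and the exact value of the index $k$ are both indispensable, and it is easy to drop cross terms.

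For the nilpotency condition I would compute $y - y^2 c = y(1-yc)$. Since $1 - yc = b(1-xu)a$ and $x(1-xu) = xx^\pi(1+xr)$, one gets $y - y^2 c = b\,w\,a$ with $w = xx^\pi(1+xr)$, again a function of $x$. Writing $s = ab = 1-x$ and using $ws = sw$, an easy induction gives $(y - y^2 c)^n = b\,w^n s^{n-1} a$; because $w^k = (xx^\pi)^k(1+xr)^k = 0$ (as $(xx^\pi)^k = x^k x^\pi = 0$), it follows that $(y - y^2 c)^k = 0$, so $y$ is Drazin invertible with $c = y^D$ and ${\rm ind}(y) \le k$. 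Finally, to pin down the index exactly I would invoke symmetry: the statement just proved, applied with the roles of $a$ and $b$ interchanged, yields ${\rm ind}(1-ab) \le {\rm ind}(1-ba) \le k = {\rm ind}(1-ab)$, which forces ${\rm ind}(1-ba) = k$.
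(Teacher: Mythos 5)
Your proposal is correct, but note that the paper itself offers no proof of this lemma: it is imported verbatim as a known result, cited to [1, Theorem 3.6] (Castro-Gonz\'alez, Mendes-Ara\'ujo and Patr\'icio), and Section 2 explicitly states its lemmas ``without proofs.'' So there is no in-paper argument to compare against; what you have produced is a self-contained verification of the cited result. I checked the computations and they hold up: the intertwining relations $ay=xa$, $bx=yb$ are right; the expansion of $cyc-c$ does collapse to $-b\Phi a$ with $\Phi=-1-u+2xu+xu^2-x^2u^2$ (the cross terms in $u^2=p^2+x^\pi r^2$ vanish because $px^\pi=x^\pi p=0$); and the bracket $-1+r-2xr+xr^2(1-x)$ telescopes via $(1-x)r=1-x^k$ to $-x^k(1+xr)=-x^k(r+x^k)$, killed by $x^kx^\pi=0$. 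The nilpotency step $(y-y^2c)^n=bw^ns^{n-1}a$ with $w=xx^\pi(1+xr)$ and the symmetry argument pinning down ${\rm ind}(1-ba)=k$ are both sound (the degenerate case $k=0$ reduces to Jacobson's lemma and causes no trouble, since then $x^\pi=0$ forces $w=0$). This is essentially the standard proof of the Drazin-inverse version of Jacobson's lemma, and it is a reasonable reconstruction of what reference [1] does.
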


\section{\bf Main results}

In this section, we present some formulae on the Drazin inverse of difference and product of idempotents of ring $R$.

\begin{definition} Let $p-q\in R^D$. Define $F$, $G$ and $H$ as
\begin{center}
$F=p(p-q)^D$, $G=(p-q)^D p$, $H=(p-q)^D(p-q)$.
\end{center}
\end{definition}

\begin{theorem} Let $p-q\in R^D$. Then $F$, $G$ and $H$ above are idempotents and\\
$(1)$ $F=(p-q)^D (1-q),$\\
$(2)$ $G=(1-q)(p-q)^D$.
\end{theorem}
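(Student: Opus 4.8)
Write $a=p-q$ throughout, so that by Definition~3.1 we have $F=pa^{D}$, $G=a^{D}p$ and $H=a^{D}a$. The plan is to first extract two intertwining identities directly from $p^{2}=p$ and $q^{2}=q$, and then transport them through $a^{D}$ using only the defining Drazin identities $a^{D}=a(a^{D})^{2}=(a^{D})^{2}a$ and $a^{D}aa^{D}=a^{D}$.

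First I would establish the \emph{intertwining relations}. Expanding products gives $pa=p(p-q)=p-pq=(p-q)(1-q)=a(1-q)$ and $(1-q)a=(1-q)(p-q)=p-qp=(p-q)p=ap$, so that
\begin{center}
$pa=a(1-q)$ and $(1-q)a=ap$.
\end{center}
I would also record the two companions $p(1-q)=pa$ and $(1-q)p=ap$, both immediate since $p(1-q)=p-pq$ and $(1-q)p=p-qp$. Chaining the displayed relations then yields $pa^{2}=a(1-q)a=a\cdot ap=a^{2}p$, and symmetrically $(1-q)a^{2}=a^{2}(1-q)$. Hence both $p$ and $1-q$ lie in ${\rm comm}(a^{2})$. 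Since $a^{2}\in R^{D}$ with $(a^{2})^{D}=(a^{D})^{2}$, the standard double commutant property of the Drazin inverse (any element commuting with a Drazin invertible element commutes with its Drazin inverse) forces $p$ and $1-q$ to commute with $(a^{D})^{2}$.

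With this in hand the two formulae follow by bookkeeping. Using $a^{D}=a(a^{D})^{2}$ together with $pa=a(1-q)$ and the commutation of $1-q$ with $(a^{D})^{2}$,
\begin{center}
$pa^{D}=pa(a^{D})^{2}=a(1-q)(a^{D})^{2}=a(a^{D})^{2}(1-q)=a^{D}(1-q)$,
\end{center}
which is $(1)$; likewise $a^{D}p=(a^{D})^{2}ap=(a^{D})^{2}(1-q)a=(1-q)(a^{D})^{2}a=(1-q)a^{D}$, giving $(2)$. For idempotency, $H^{2}=a^{D}aa^{D}a=a^{D}a=H$ is immediate. For $F$ I would apply $(2)$ to the inner factor of $F^{2}=p(a^{D}p)a^{D}$, obtaining $F^{2}=p(1-q)a^{D}a^{D}=pa(a^{D})^{2}=pa^{D}=F$ via $p(1-q)=pa$; symmetrically, applying $(1)$ to the inner factor of $G^{2}=a^{D}(pa^{D})p$ and using $(1-q)p=ap$ gives $G^{2}=(a^{D})^{2}ap=a^{D}p=G$.

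The only non-formal point, and the step I expect to be the real obstacle, is the reduction in the second paragraph: although $p$ (and $1-q$) need not commute with $a$ or with $a^{D}$ individually, they do commute with $a^{2}$, hence with $(a^{2})^{D}=(a^{D})^{2}$. This passage to the square is exactly what makes the intertwining identities usable, and everything after it is routine manipulation with the defining Drazin identities.
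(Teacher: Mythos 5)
Your proposal is correct and follows essentially the same route as the paper: both hinge on the observation that $p$ (and $1-q$) commute with $(p-q)^2$, hence with $((p-q)^2)^D=((p-q)^D)^2$ by the double-commutant property, and then shuttle $p$ through $(p-q)^D=(p-q)((p-q)^D)^2$ using the intertwining identity $p(p-q)=(p-q)(1-q)$. The subsequent verification of idempotency of $F$, $G$, $H$ matches the paper's computation as well.
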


\begin{proof} Since $p$, $q$ are idempotents, we get
\begin{center}
$p(p-q)^2=(p-q)^2p=p-pqp$.
\end{center}
Note that $a\in R^D$ and $ab=ba$ imply $a^Db=ba^D$ by [6, Corollary 2]. It follows that $p\in {\rm comm}((p-q)^D)^2$. Hence, we have
\begin{eqnarray*}
F &=& p(p-q)^D=p((p-q)^D)^2(p-q)\\
&=&((p-q)^D)^2p(p-q)=((p-q)^D)^2(p-q)(1-q)\\
 &=&(p-q)^D (1-q).
\end{eqnarray*}

Next, we prove that $F$ is idempotent. From
\begin{center}
$p(p-q)^D=(p-q)^D (1-q)$,
\end{center}
we have
\begin{eqnarray*}
        F^2 &=& (p-q)^D(1-q)p(p-q)^D = (p-q)^D(1-q)(p-q)(p-q)^D \\
         &=& p(p-q)^D(p-q)(p-q)^D = p(p-q)^D\\
         &=&F.
      \end{eqnarray*}

Similarly, $G^2=G=(1-q)(p-q)^D$. It is obvious that $H$ is idempotent and $H=(p-q)(p-q)^D=(p-q)^D(p-q)$.
\end{proof}

We replace $p$ by $q$ in Theorem 3.2 to obtain more relations among $F$, $G$ and $H$.

\begin{corollary} Let $p-q\in R^D$. Then \\
$(1)$ $q(p-q)^D=(p-q)^D(1-p),$\\
$(2)$ $(p-q)^Dq=(1-p)(p-q)^D,$\\
$(3)$ $qH=Hq,$\\
$(4)$ $G(1-q)=(1-q)F.$
\end{corollary}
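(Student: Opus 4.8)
The plan is to derive (1), (2) and (4) almost for free from Theorem 3.2 by interchanging the roles of $p$ and $q$, and to handle (3) by a separate commutation argument.

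First I would record the elementary fact that $(q-p)^D=-(p-q)^D$, which follows at once by checking the three defining conditions of the Drazin inverse for $-(p-q)^D$ (i.e.\ $(-a)^D=-a^D$). Since Theorem 3.2 holds for \emph{any} two idempotents, I apply it to the pair $(q,p)$ in place of $(p,q)$, obtaining $q(q-p)^D=(q-p)^D(1-p)$ and $(q-p)^D q=(1-p)(q-p)^D$. Substituting $(q-p)^D=-(p-q)^D$ and cancelling the common sign gives precisely statements (1) and (2).

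Statement (4) then needs no new computation: Theorem 3.2 already gives $F=(p-q)^D(1-q)$ and $G=(1-q)(p-q)^D$, so
\[
G(1-q)=(1-q)(p-q)^D(1-q)=(1-q)F,
\]
reading the middle triple product in the two available ways.

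The main obstacle, and the only step requiring a genuinely new idea, is (3). The point is that $q$ does \emph{not} commute with $p-q$, so one cannot simply slide $q$ past the factors of $H$. Instead I would first verify by a short direct computation that $q$ commutes with the \emph{square}: $q(p-q)^2=(p-q)^2q=q-qpq$. Because $(p-q)^2\in R^D$ with $((p-q)^2)^D=((p-q)^D)^2$, the commutation property [6, Corollary 2] then yields that $q$ also commutes with $((p-q)^D)^2$. Finally, using the identity $aa^D=a^2(a^D)^2$ (valid for any $a\in R^D$, since $a$ and $a^D$ commute and $aa^D$ is idempotent) with $a=p-q$, I get $H=(p-q)^2((p-q)^D)^2$ as a product of two elements each commuting with $q$; hence $qH=Hq$. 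The only thing to watch is that the commutation must be secured at the level of $(p-q)^2$ rather than of $p-q$ itself.
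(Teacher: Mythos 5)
Your proposal is correct. For (1) and (2) you follow the same route as the paper: interchange $p$ and $q$ in Theorem 3.2 and use $(q-p)^D=-(p-q)^D$. For (4) you actually streamline the paper's argument: the paper computes $G(1-q)=(p-q)^Dp(p-q)=(1-q)(p-q)^D(p-q)=\cdots=(1-q)p(p-q)^D$ in several steps, whereas you simply observe that both $G(1-q)$ and $(1-q)F$ equal the single triple product $(1-q)(p-q)^D(1-q)$ by the two identities of Theorem 3.2 --- a cleaner proof of the same fact. The real divergence is (3): the paper deduces $qH=Hq$ in one line from part (1) together with the elementary identity $(1-p)(p-q)=(p-q)q$, namely $qH=q(p-q)^D(p-q)=(p-q)^D(1-p)(p-q)=(p-q)^D(p-q)q=Hq$; you instead reprove commutation from scratch by checking $q(p-q)^2=(p-q)^2q=q-qpq$, invoking [6, Corollary 2] to conclude $q\in{\rm comm}\bigl(((p-q)^D)^2\bigr)$, and factoring $H=(p-q)^2((p-q)^D)^2$. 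Your route is slightly longer but self-contained (it does not depend on part (1)) and exactly mirrors the mechanism used to prove Theorem 3.2, making the structural symmetry between $p$ and $q$ explicit; all the auxiliary identities you rely on ($(q-p)^D=-(p-q)^D$, $aa^D=a^2(a^D)^2$, $(a^2)^D=(a^D)^2$) are correct and available in the paper's framework.
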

\begin{proof} (1) We can get (1) and (2) in a similar way of Theorem 3.2.

(3) Since $H=(p-q)^D(p-q)$, we have
\begin{eqnarray*}
  qH &=& q(p-q)^D(p-q)=(p-q)^D(1-p)(p-q) \\
   &=& (p-q)^D(p-q)q\\
   &=&Hq.
\end{eqnarray*}

(4) By Theorem 3.2, we have
\begin{eqnarray*}
  G(1-q) &=& (p-q)^Dp(p-q)=(1-q)(p-q)^D(p-q) \\
   &=& (1-q)(1-q-1+p)(p-q)^D=(1-q)p(p-q)^D\\
   &=&(1-q)F.
\end{eqnarray*}

The proof is complete.
\end{proof}

\begin{theorem} Let $p-q\in R^D$. Then \\
$(1)$ $Fp=pG=pH=Hp,$\\
$(2)$ $qHq=qH=Hq=HqH.$
\end{theorem}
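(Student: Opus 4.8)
The plan is to reduce everything to the two alternative expressions for $F$, $G$, $H$ recorded in Theorem 3.2, namely $F=p(p-q)^D=(p-q)^D(1-q)$, $G=(p-q)^Dp=(1-q)(p-q)^D$, and $H=(p-q)(p-q)^D=(p-q)^D(p-q)$, together with the commutation $qH=Hq$ already available from Corollary 3.3(3). The only genuine computation will be two elementary rewrites using $q^2=q$; everything else is substitution.

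For part (1), I would first read off $Fp=pG$ directly from the definitions, since $Fp=p(p-q)^Dp=pG$ with no work at all. To bring $H$ into the picture I would rewrite $pH$ and $Hp$ using the idempotent forms of $F$ and $G$. Writing $pH=p(p-q)^D(p-q)=F(p-q)=(p-q)^D(1-q)(p-q)$ and invoking the identity $(1-q)(p-q)=(1-q)p$ (immediate from $(1-q)q=q-q^2=0$), this collapses to $(p-q)^D(1-q)p=Fp$. Symmetrically, $Hp=(p-q)(p-q)^Dp=(p-q)G=(p-q)(1-q)(p-q)^D$, and the companion identity $(p-q)(1-q)=p(1-q)$ turns this into $p(1-q)(p-q)^D=pG$. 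Chaining these equalities then yields $Fp=pG=pH=Hp$, as required.

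For part (2), the commutation $qH=Hq$ is exactly Corollary 3.3(3), so that equality is free. The remaining two equalities follow purely from idempotency: using $q^2=q$ I would compute $qHq=q(Hq)=q(qH)=q^2H=qH$, and using $H^2=H$ I would compute $HqH=H(qH)=H(Hq)=H^2q=Hq$. Combining these gives the full chain $qHq=qH=Hq=HqH$.

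I expect the only real work to be the two elementary rewrites $(1-q)(p-q)=(1-q)p$ and $(p-q)(1-q)=p(1-q)$ in part (1), since these are what let one pass between the single-idempotent form of $H$ and the idempotent forms of $F$ and $G$. The conceptual heart of part (2)—moving $q$ past $H$—is supplied ready-made by Corollary 3.3(3), so I anticipate no real obstacle there beyond bookkeeping with the idempotent relations.
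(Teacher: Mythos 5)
Your proposal is correct and follows essentially the same route as the paper: both reduce part (1) to the alternative expressions for $F$, $G$, $H$ from Theorem 3.2 together with the elementary identities $(1-q)(p-q)=(1-q)p=(p-q)p$ and $(p-q)(1-q)=p(1-q)$, and both obtain part (2) from Corollary 3.3(3) plus the idempotency of $q$ and $H$. No gaps.
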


\begin{proof} (1) It is obvious $Fp=pG$, we only need to show $pG=pH$ and $pH=Hp$.
\begin{eqnarray*}
  pG &=& p(p-q)^D p = (p-q)^D(1-q)p \\
   &=& (p-q)^D (p-q)p\\
   &=& Hp.
\end{eqnarray*}

According to Theorem 3.2, we get
\begin{eqnarray*}
  pH &=& p(p-q)^D (p-q)=(p-q)^D(1-q)(p-q) \\
     &=& (p-q)^D (p-q)p\\
     &=&  Hp.
\end{eqnarray*}

Hence, (1) holds.

(2) Note that $qH=Hq$ in Corollary 3.3(3). We obtain that $qHq=(Hq)q=Hq$. Since $H$ is idempotent, $HqH=H^2q=Hq$.
Thus, $qHq=qH=Hq=HqH$.
\end{proof}

The following theorems, the main result of this paper, give the formulae of the Drazin inverses of product and difference of idempotents in a ring $R$.
\begin{theorem} Let $p-q\in R^D$. Then \\
$(1)$ $ (1-pqp)^D=[(p-q)^D]^2p+1-p,$\\
$(2)$ $ (p-pqp)^D= [(p-q)^D]^2p=p[(p-q)^D]^2,$\\
$(3)$ $ (p-pq)^D=p[(p-q)^D]^3,$\\
$(4)$ $ (p-qp)^D=[(p-q)^D]^3p,$\\
$(5)$ If ${\rm ind}(p-q)=k$, then
$$(1-pq)^D=1-p+[(p-q)^D]^2[p+pq(1-p)]+[\sum_{i=0}^{k-1}(p-q)^\pi(p-q)^{2i}]pq(p-1).$$
\end{theorem}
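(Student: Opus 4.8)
Throughout I would write $s=p-q$ and $d=(p-q)^D$, and first record the relations to be used, all available from the preceding results: $ps^2=s^2p=p-pqp$, together with $pd^2=d^2p$ and $(s^2)^D=d^2$; the identities $pd=d(1-q)$ and $dp=(1-q)d$ from Theorem 3.2; the commutativity $sd=ds$ from the definition of the Drazin inverse; and the fact that $p$ commutes with the spectral idempotent $s^\pi=1-sd$, which follows from $s^\pi p=p-sdp=p-s(1-q)d=p-(p-pq)d=p(1-sd)=ps^\pi$.

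Parts (2) and (1) are the quick ones. For (2) the plan is to read $p-pqp=ps^2=s^2p$ as a product of the two commuting Drazin invertible elements $p$ (with $p^D=p$) and $s^2$ (with $(s^2)^D=d^2$); the commuting-product clause of Lemma 2.3 then gives $(p-pqp)^D=p^D(s^2)^D=pd^2=d^2p$. For (1) I would split $1-pqp=(1-p)+(p-pqp)$ and note that the summands annihilate each other, $(1-p)(p-pqp)=(p-pqp)(1-p)=0$, which is immediate from $p-pqp=ps^2=s^2p$. Using the standard additivity of the Drazin inverse over orthogonal summands (or by directly verifying the three defining properties) together with $(1-p)^D=1-p$, this yields $(1-pqp)^D=(1-p)+d^2p$.

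For (3) I would verify directly that $b:=pd^3$ is the Drazin inverse of $v:=p-pq=ps$. Using the recorded relations one checks $bv=vb=pd$ and $bvb=b$, so $b$ is a commuting outer inverse of $v$; the only remaining point is that $v-v^2b=p(s-s^2d)=pss^\pi$ is nilpotent, which holds because $p$ commutes with both $s^2$ and $s^\pi$, so that $(pss^\pi)^j=ps^{2j-1}s^\pi=p(ss^\pi)^{2j-1}$ vanishes once $2j-1\ge{\rm ind}(s)$. Part (4) I would then deduce from (3) through the other clause of Lemma 2.3, namely $(ba)^D=b((ab)^D)^2a$, applied with $b=s$ and $a=p$: here $ba=sp=p-qp$ and $ab=ps=p-pq$, so $(p-qp)^D=s((p-pq)^D)^2p=s(pd^3)^2p$, which collapses to $d^3p$ after repeatedly using $sd^2=d$ and $pd^2=d^2p$.

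Part (5) is the crux and the main obstacle. The plan is to invoke Lemma 2.4 with $a=pq$ and $b=p$, so that $ab=pqp$ and $ba=pq$; since $1-pqp\in R^D$ by part (1), the lemma delivers $(1-pq)^D=1+p[(1-pqp)^D-(1-pqp)^\pi r]pq$ with $r=\sum_{i=0}^{K-1}(1-pqp)^i$ and $K={\rm ind}(1-pqp)$, and the real work is to evaluate the ingredients. Substituting $(1-pqp)^D=d^2p+1-p$ gives $p(1-pqp)^D pq=d^2pq$; a short computation shows $(1-pqp)(1-pqp)^D=(1-p)+php$ with $php=ph$, hence $(1-pqp)^\pi=p-ph=ps^\pi$; feeding this back yields $(1-pqp)(1-pqp)^\pi=ps^2s^\pi$ and therefore $(1-pqp)^i(1-pqp)^\pi=ps^{2i}s^\pi$, so that $(1-pqp)^\pi r=p\sum_{i=0}^{k-1}s^\pi s^{2i}$. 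The step I expect to be most delicate is reconciling the summation range: Lemma 2.4 produces $r$ with upper limit $K-1=\lceil k/2\rceil-1$, whereas the statement is phrased with $k={\rm ind}(p-q)$; the point is that $s^\pi s^{2i}=(ss^\pi)^{2i}=0$ as soon as $2i\ge k$, so enlarging the sum from $K-1$ to $k-1$ only appends zero terms. Assembling everything gives $(1-pq)^D=1+(d^2-\sum_{i=0}^{k-1}s^\pi s^{2i})pq$, and a final identity $(d^2-\sum_{i=0}^{k-1}s^\pi s^{2i})pqp=(d^2-1)p$ (itself proved from $s^2d^2=ss^D$, the telescoping $(\sum_{i=0}^{k-1}s^\pi s^{2i})s^2=\sum_{i=0}^{k-1}s^\pi s^{2i}-s^\pi$, and $ph+ps^\pi=p$) lets one rewrite the answer in the displayed form $1-p+d^2[p+pq(1-p)]+[\sum_{i=0}^{k-1}s^\pi s^{2i}]pq(p-1)$. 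The difficulty throughout (5) is bookkeeping rather than any single hard idea.
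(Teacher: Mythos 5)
Your proposal is correct and follows essentially the same route as the paper: the orthogonal splitting $1-pqp=(p-pqp)+(1-p)$ for (1), the commuting product $p\cdot(p-q)^2$ for (2), direct verification of the Drazin axioms for (3), and Lemma 2.4 with $a=pq$, $b=p$ for (5). The only deviations are cosmetic improvements — you derive (4) from (3) via the Cline-type formula in Lemma 2.3 rather than repeating the direct verification, and you explicitly justify the summation-range change from ${\rm ind}(1-pqp)$ to $k={\rm ind}(p-q)$ and carry out the final simplification that the paper leaves implicit after ``Note that (1)''.
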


\begin{proof}
(1) Note that $1-pqp=(p-q)^2p+1-p$ and $((p-q)^2)^D=((p-q)^D)^2$. Since $(p-q)^2p(1-p)=(1-p)(p-q)^2p=0$, $(1-pqp)^D= [(p-q)^D]^2p+1-p$ by [6, Corollary 1].

(2) Observing that $p-pqp=p(p-q)^2=(p-q)^2p$, we get $(p-pqp)^D=[(p-q)^D]^2p=p[(p-q)^D]^2$ from Lemma 2.3.

(3) Let $x=p[(p-q)^D]^3$. We prove that $x$ is the Drazin inverse of $p-pq$ by showing the following conditions hold.

(a) From $p(p-q)^2=(p-q)^2p=(p-pq)p$, it follows that
\begin{eqnarray*}
 ~~~ (p-pq)x &=& (p-pq)p[(p-q)^D]^3 = p(p-q)^2[(p-q)^D]^3 \\
  &=& p(p-q)^D
\end{eqnarray*}
and
\begin{eqnarray*}
  x(p-pq) &=& p[(p-q)^D]^3(p-pq)= p(p-q)^D[(p-q)^D ]^2p(p-q)\\
   &=& p(p-q)^D p(p-q)^D= p(p-q)^D\\
   &=&(p-pq)x.
\end{eqnarray*}

(b) Note that $(p-pq)x=p(p-q)^D$. We have
\begin{eqnarray*}
x(p-pq)x &=& p[(p-q)^D]^3p(p-q)^D= [(p-q)^D]^2p(p-q)^Dp(p-q)^D \\
 &=& [(p-q)^D]^2p(p-q)^D=p[(p-q)^D]^3 \\
 &=& x.
\end{eqnarray*}

(c) Since $x(p-pq)^D=p(p-q)^D$, we obtain that
\begin{eqnarray*}
  (p-pq)-(p-pq)^2x &=& (p-pq)-(p-pq)p(p-q)^D \\
   &=& p(p-q)-p(p-q)^2(p-q)^D \\
   &=& p(p-q)(p-q)^\pi.
\end{eqnarray*}

 According to $pH=Hp$ and $qH=Hq$, it follows that $p(p-q)(p-q)^\pi=(p-q)^\pi p(p-q)$. By induction, $(p(p-q))^m=p(p-q)^{2m-1}$. Take $m \geqslant {\rm ind}(p-q)$, then
\begin{center}
$[(p(p-q)(p-q)^\pi)]^m=p(p-q)^{2m-1}(p-q)^\pi=0$.
\end{center}

Hence, $(p-pq)-(p-pq)^2x$ is nilpotent.

Therefore, $(p-pq)^D=p[(p-q)^D]^3$.

(4) Use a similar proof of (3).

(5) Since $p-q\in R^D$, $1-pq \in R^D$ according to Lemma 2.1. By Lemma 2.4, we obtain
\begin{center}
$(1-pq)^D=1+p[(1-pqp)^D-(1-pqp)^\pi r]pq$,
\end{center}
 where $r=\displaystyle{\sum_{i=0}^{k-1}}(1-pqp)^i$. Note that (1). We have
$$(1-pq)^D=1-p+[(p-q)^D]^2[p+pq(1-p)]+[\sum_{i=0}^{k-1}(p-q)^\pi(p-q)^{2i}]pq(p-1).$$
\end{proof}

\begin{theorem} Let $1-p-q\in R^D$. Then\\
$(1)$ $(pqp)^D=[(1-p-q)^D]^2p=p[(1-p-q)^D]^2,$\\
$(2)$ $(pq)^D=[(1-p-q)^D]^4pq.$
\end{theorem}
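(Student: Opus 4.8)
The plan is to obtain both formulae from Theorem 3.6 via the substitution $q \mapsto 1-q$, which is legitimate since $1-q$ is again an idempotent and, decisively, $p-(1-q) = -(1-p-q)$; hence the hypothesis ``$p-(1-q)\in R^D$'' of Theorem 3.6 coincides with the present hypothesis $1-p-q\in R^D$. For (1) I would first record the elementary identity $p(1-q)p = p-pqp$ (immediate from $p^2=p$), so that $p-p(1-q)p = pqp$. Applying Theorem 3.6(2) to the idempotent pair $(p,\,1-q)$ then gives $(pqp)^D = [(p-(1-q))^D]^2 p = p[(p-(1-q))^D]^2$. Because $(p-(1-q))^D = (-(1-p-q))^D = -(1-p-q)^D$, the square absorbs the sign and delivers exactly $(pqp)^D = [(1-p-q)^D]^2 p = p[(1-p-q)^D]^2$, which is (1). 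I would keep the commutation $p[(1-p-q)^D]^2 = [(1-p-q)^D]^2 p$ read off from these two equal expressions; it is the main lever for (2).

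For (2) the idea is to pass from $(pqp)^D$ to $(pq)^D$ through Cline's formula (Lemma 2.3). Since $p\cdot(pq) = pq$ and $(pq)\cdot p = pqp$, I apply Lemma 2.3 with $b=p$, $a=pq$ (all of $p$, $pq$, $pqp$ lie in $R^D$ — the last by part (1), and $pq$ by Lemma 2.2), obtaining $(pq)^D = p[(pqp)^D]^2\,pq$. Writing $u=[(1-p-q)^D]^2$, part (1) says $(pqp)^D = up = pu$, whence $[(pqp)^D]^2 = (up)(pu) = u\,p^2\,u = u^2 p = [(1-p-q)^D]^4 p$. Substituting and using $p\cdot pq = pq$ together with $pu^2 = u^2 p$ collapses $(pq)^D = p\,u^2\,p\,pq = u^2\,pq = [(1-p-q)^D]^4 pq$, as claimed.

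The genuinely nontrivial points are twofold. First is the sign bookkeeping in the substitution: it is essential that (1) is routed through the \emph{even}-power formula Theorem 3.6(2), whose square annihilates the sign introduced by $p-(1-q)=-(1-p-q)$; a naive attempt to reach (2) directly from the \emph{odd}-power formula Theorem 3.6(3) would instead yield $(pq)^D = -p[(1-p-q)^D]^3$, a correct but differently shaped expression, which is precisely why I would derive (2) from (1) via Cline rather than by direct substitution. Second, Lemma 2.3 requires $pqp\in R^D$, and this is supplied by part (1). I expect the only real obstacle to be the clean management of the commutation relations in (2) — in particular the justification of $p[(1-p-q)^D]^2 = [(1-p-q)^D]^2 p$ — but this is not ad hoc: it is exactly the pair of equal expressions established in (1), so no extra work is needed.
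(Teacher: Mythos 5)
Your proposal is correct and in substance the paper's own argument: part (1) rests on the identity $pqp=p(1-p-q)^2=(1-p-q)^2p$ together with the commuting-product part of Lemma 2.3 --- which is exactly what your substitution $q\mapsto 1-q$ into the formula $(p-pqp)^D=[(p-q)^D]^2p=p[(p-q)^D]^2$ unwinds to, sign absorbed by the even power as you note --- and part (2) is the same application of Cline's formula to $pq=p\cdot pq$, giving $(pq)^D=p[(pqp)^D]^2pq$ and then collapsing via the commutation from (1). The only correction needed is a citation slip: the results you invoke as ``Theorem 3.6(2)'' and ``Theorem 3.6(3)'' are Theorem 3.5(2) and 3.5(3) of the paper (the statement being proved here is Theorem 3.6, so the reference as written would be circular).
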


\begin{proof} (1) By $pqp=p(1-p-q)^2=(1-p-q)^2p$ and Lemma 2.2, it follows that $(pqp)^D=[(1-p-q)^D]^2p=p[(1-p-q)^D]^2$.

(2) From $pq=ppq$ and Lemma 2.3, we have
\begin{center}
$(pq)^D=p[(pqp)^D]^2pq=[(pqp)^D]^2pq$.
\end{center}

According to (1), we obtain
 \begin{center}
 $(pq)^D=[(pqp)^D]^2pq=[(1-p-q)^D]^4pq$.
 \end{center}
 \end{proof}
\begin{theorem} Let $pq\in R^D$. Then\\
$(1)$ $(pq)^D=qp$ if and only if $pq=qp,$\\
$(2)$ $(pq)^D=(pqp)^D-p((1-q)(1-p))^D,$\\
$(3)$ $(pq)^Dpq=(pqp)^Dpq.$
\end{theorem}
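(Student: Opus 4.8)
The plan is to make part (2) the computational core, deduce part (3) from it by a one-line right multiplication, and treat the two implications of part (1) by completely different arguments. Throughout write $s=1-p-q$ and $t=[(1-p-q)^D]^2$. Since $ps^2=s^2p=pqp$ and $qs^2=s^2q=qpq$, the idempotents $p$ and $q$ commute with $s^2$, hence with $(s^2)^D=t$ by [6, Corollary 2]; I will use $pt=tp$ and $pt^2=t^2p$ freely. By Theorem 3.7, $pq\in R^D$ forces $s\in R^D$, and $(pq)^D=t^2pq$, $(pqp)^D=tp=pt$.

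For part (2) the first step is to identify $((1-q)(1-p))^D$. Applying Theorem 3.7(2) to the ordered pair $(1-q,\,1-p)$, whose associated element is $1-(1-q)-(1-p)=-s$ (Drazin invertible by Lemma 2.2), I obtain $((1-q)(1-p))^D=[(-s)^D]^4(1-q)(1-p)=t^2(1-q)(1-p)$. A direct expansion gives $p(1-q)(1-p)=pqp-pq$, so $p((1-q)(1-p))^D=t^2(pqp-pq)$. Substituting the three expressions into the claimed formula, part (2) collapses to the single equation $tp=t^2pqp$. This I verify by writing $pqp=ps^2$ and using the routine Drazin identity $t^2s^2=t$ (which holds because $t\,s^2=s^Ds$ is idempotent and $t$ absorbs it): indeed $t^2pqp=t^2ps^2=pt^2s^2=pt=tp$, which closes part (2).

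Part (3) is then immediate. Right multiplying the identity of part (2) by $pq$ yields $(pq)^Dpq=(pqp)^Dpq-p((1-q)(1-p))^Dpq$, and the last term vanishes because $p((1-q)(1-p))^Dpq=t^2(pqp-pq)pq=t^2(pqpq-pqpq)=0$, using $p^2=p$. Hence $(pq)^Dpq=(pqp)^Dpq$.

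For part (1) the direction $pq=qp\Rightarrow(pq)^D=qp$ is easy: then $(pq)^2=p(qp)q=p(pq)q=pq$, so $pq$ is idempotent and $(pq)^D=pq=qp$. For the converse I would unwind the three defining conditions of $(pq)^D=qp$: commutativity $qp\cdot pq=pq\cdot qp$ reads $qpq=pqp$, and $qp\cdot pq\cdot qp=qp$ reads $(qp)^2=qp$; right multiplying the first by $p$ and using the second gives $qp=pqp$, whence $pqp=qpq=qp$. It follows that $c:=pq-qp$ satisfies $pc=c$, $cq=c$, $qc=cp=0$, and in fact $c^2=0$; the remaining nilpotency condition $pq-(pq)^2qp\in R^{\rm nil}$ is then automatic. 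The main obstacle is the last inference, passing from $c^2=0$ to $c=0$: this is genuinely delicate, since the relations above do not by themselves exclude a nonzero nilpotent difference (for example when $pq$ is itself nilpotent one has $qp=(pq)^D=0$ while $pq\ne0$), so forcing $pq=qp$ requires some additional input that I would need to pin down carefully.
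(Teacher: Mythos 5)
Your treatment of parts (2) and (3) is correct, and it takes a genuinely different route from the paper. The paper obtains (2) by adding the two formulae of Theorem 3.5(4) into the identity $(q-pq)^D+(p-qp)^D=((p-q)^D)^2$, substituting $1-p$ for $p$, and multiplying on the left by $p$; it proves (3) independently via Cline's formula (Lemma 2.3), writing $(pqp)^D=pq((pq)^D)^2p$. You instead express all three Drazin inverses through the formulas $(pq)^D=[(1-p-q)^D]^4pq$ and $(pqp)^D=[(1-p-q)^D]^2p$ (which is Theorem 3.6 of the paper, not 3.7 as you cite it), reduce (2) to the single identity $tp=t^2s^2p=t^2pqp$, and get (3) by right multiplication. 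Your computations check out: $ps^2=s^2p=pqp$, $t^2s^2=t$, and $(pqp-pq)pq=0$ are all correct, and your route has the merit of making (3) a one-line corollary of (2) and of resting only on Theorem 3.6 and Lemma 2.2 rather than on Theorem 3.5.

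The gap you flag in the converse of (1) is real, and it is not a defect of your argument: the statement fails for the Drazin inverse. Your own remark already contains the counterexample; concretely, take $p=\left(\begin{smallmatrix}1&1\\0&0\end{smallmatrix}\right)$ and $q=\left(\begin{smallmatrix}0&0\\0&1\end{smallmatrix}\right)$ over any field. Then $qp=0$ and $pq=\left(\begin{smallmatrix}0&1\\0&0\end{smallmatrix}\right)$ is nilpotent, so $(pq)^D=0=qp$ while $pq\neq qp$. Your reduction is exactly right: the Drazin axioms for $(pq)^D=qp$ amount to $pqp=qpq$ and $(qp)^2=qp$, these force $c=pq-qp$ only to satisfy $c^2=0$, and nothing excludes $c\neq0$. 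The paper's own proof of this direction asserts ``we also get $pqpq=pq$,'' but $pqpq=(pq)(qp)(pq)$ is the equation $aba=a$ for $a=pq$, $b=qp$ --- a group-inverse axiom, not a Drazin-inverse axiom --- so that step is unjustified and is precisely where the published argument breaks. The implication does hold if $(pq)^D$ is replaced by the group inverse $(pq)^{\#}$, equivalently under the extra hypothesis ${\rm ind}(pq)\leq1$: then $pqpq=pq$ is available and the chain $pq=pqpq=pqp=qpq=qp$ closes. So the ``additional input'' you were looking for is not a cleverer manipulation but a strengthened hypothesis on $pq$.
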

\begin{proof}

(1) If $pq=qp$, we can get $(pq)^D=qp$ by a direct calculation.

Conversely, since $(pq)^D=qp$, we obtain $pqp=qpq$ and $qp=qpqp$. On the other hand, we also get $pqpq=pq$. Hence,
\begin{center}
$pq=pqpq=ppqp=pqp=qpq=qpqp=qp$.
\end{center}

(2) By Theorem 3.5(4), we have  $(p-qp)^D=((p-q)^D)^3p$ and $$(q-pq)^D=((q-p)^D)^3q=-((p-q)^D)^3q.$$

Hence,
$$(q-pq)^D+(p-qp)^D=((p-q)^D)^3(-q)+((p-q)^D)^3p=((p-q)^D)^2.  \eqno(3.1)$$

We replace $p$, $q$ by $1-p$ and $q$ in the equality (3.1) to get
$$(pq)^D+((1-q)(1-p))^D=((1-p-q)^D)^2. \eqno(3.2) $$

Multiplying the equality (3.2) by $p$ on the left yields
$$p(pq)^D+p((1-q)(1-p))^D=p((1-p-q)^D)^2. \eqno(3.3) $$

Note that $p(pq)^D=p(pq)(pq)^D(pq)^D=(pq)^D$ and Theorem 3.6. We have $$(pq)^D=(pqp)^D-p((1-q)(1-p))^D.$$

(3) By Lemma 2.3, we have
\begin{center}
$(pqp)^Dpq=pq((pq)^D)^2pq=(pq)^Dpq$.
\end{center}

Completing the proof.
\end{proof}

\begin{theorem} Let $1-pq\in R^D$. Then $p-q\in R^D$ and
\begin{center}
$(p-q)^D=(1-pq)^D(p-pq)+(p+q-pq)^D(pq-q)$.
\end{center}
\end{theorem}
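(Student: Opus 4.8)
The plan is to build the formula from the splitting $p-q=(p-pq)+(pq-q)$, which matches the two summands on the right. First, existence is free: since $1-pq\in R^D$ and $p-q$ belongs to the list in Lemma 2.1, every element of that list is Drazin invertible; in particular $p-q$, $p-pq$, $pq-q=-(q-pq)$ and $p+q-pq$ all lie in $R^D$, so both sides are defined. Using only $p^2=p$ and $q^2=q$ one records the elementary identities $p-pq=(p-q)(1-q)$ and $pq-q=(p-q)q$, which I will use repeatedly.

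I would reduce the theorem to the two identities
\[
(1-pq)^D(p-pq)=p(p-q)^D \quad\text{and}\quad (p+q-pq)^D(pq-q)=(p-q)^Dq,
\]
call them (i) and (ii). Granting both, the theorem is immediate: by Corollary 3.3(2), $(p-q)^Dq=(1-p)(p-q)^D$, so the right-hand side collapses to $p(p-q)^D+(1-p)(p-q)^D=(p-q)^D$. Moreover (ii) need not be proved separately: applying (i) to the complementary idempotents $1-p,1-q$ (legitimate since $1-(1-p)(1-q)=p+q-pq\in R^D$), and using $(1-p)-(1-q)=-(p-q)$, hence $((1-p)-(1-q))^D=-(p-q)^D$, together with $(1-p)-(1-p)(1-q)=q-pq$, turns (i) into $(p+q-pq)^D(q-pq)=-(1-p)(p-q)^D$; negating and invoking Corollary 3.3(2) once more gives exactly (ii). So the whole proof rests on (i).

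To attack (i) I would use that $H=(p-q)(p-q)^D$ commutes with $p$ and $q$ (Theorem 3.4 and Corollary 3.3(3)), hence with $pq$, with $1-pq$, with $(1-pq)^D$, and with $(p-q)^D$. Consequently (i) is equivalent to its two components cut out by the commuting idempotents $H$ and $(p-q)^\pi=1-H$. In the $H$-part $p-q$ is group invertible; there the identity $(p-q)H=p-q$ holds, so $(1-pq)\,p(p-q)^D=(p-q)^2p(p-q)^D=(p-q)(1-q)=p-pq$, and (i) reduces to checking that $p(p-q)^D$ is fixed by $(1-pq)(1-pq)^D$, after which applying $(1-pq)^D$ returns $p(p-q)^D$. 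In the $(p-q)^\pi$-part $p-q$ is nilpotent, so $(p-q)^D=0$ and the right side of (i) dies; what remains is to prove that the left side also dies there, i.e. that $(1-pq)^D(p-pq)(p-q)^\pi=0$.

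This last vanishing is the main obstacle, because the spectral idempotent $(1-pq)^\pi$ is genuinely nonzero in this corner and must absorb the nilpotent term $(p-pq)(p-q)^\pi=[(p-q)(p-q)^\pi](1-q)$. To force it I would insert the explicit form of $(1-pq)^D$ supplied by Lemma 2.4 with $a=pq$, $b=p$, namely $(1-pq)^D=1+p[(1-pqp)^D-(1-pqp)^\pi r]pq$ with $r=\sum_{i=0}^{k-1}(1-pqp)^i$, and then feed in Theorem 3.5(1), $(1-pqp)^D=[(p-q)^D]^2p+1-p$, along with the companion computation $(1-pqp)^\pi=(p-q)^\pi p$. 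Simplifying with $p-pqp=(p-q)^2p$, $pqp(1-q)=(1-(p-q)^2)(p-pq)$, and the commutation rules of Theorem 3.2 and Corollary 3.3, the expression splits into a core part that reproduces $p(p-q)^D$ and a correction carrying the factor $(1-pqp)^\pi r$; showing that this correction cancels the residual nilpotent term exactly is the delicate step, and the one I expect to demand the most bookkeeping. As a cross-check one can instead substitute the closed formula of Theorem 3.5(5) for $(1-pq)^D$ and its $p\mapsto1-p$, $q\mapsto1-q$ analogue for $(p+q-pq)^D$ directly into the right-hand side, but that route is longer and less transparent.
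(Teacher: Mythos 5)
Your reduction of the theorem to the two identities (i) $(1-pq)^D(p-pq)=p(p-q)^D$ and (ii) $(p+q-pq)^D(pq-q)=(p-q)^Dq$, together with the derivation of (ii) from (i) by passing to the complementary idempotents $1-p$, $1-q$ and invoking Corollary 3.3(2), is exactly the skeleton of the paper's argument (these are its displayed equations (3.6) and (3.7)). The genuine gap is that you never prove (i). Your spectral-splitting plan leaves both halves open: on the $H$-corner you still must show that $p(p-q)^D$ is fixed by $(1-pq)(1-pq)^D$ (equivalently $(1-pq)^\pi p(p-q)^D=0$), and on the $(p-q)^\pi$-corner you must show $(1-pq)^D(p-pq)(p-q)^\pi=0$; you explicitly defer both to ``bookkeeping'' that is not carried out, and the second in particular is the whole difficulty, since it is precisely where the spectral idempotent of $1-pq$ has to be controlled. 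As written, this is an outline of a proof of (i), not a proof.

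The route you set aside as ``longer and less transparent'' is in fact the paper's route, and it closes the gap in three lines. Substitute the formula of Theorem 3.5(5) for $(1-pq)^D$ and multiply on the right by $p-pq=p(1-q)$: the term $1-p$ contributes nothing because $(1-p)p(1-q)=0$; the tail term contributes nothing because it ends in $pq(p-1)$ and $(p-1)p(1-q)=0$; and the middle term gives $[(p-q)^D]^2\,[p+pq(1-p)]\,p(1-q)=[(p-q)^D]^2(p-q)(1-q)=(p-q)^D(1-q)=p(p-q)^D$, using $(1-p)p=0$, $p(1-q)=(p-q)(1-q)$, and Theorem 3.2(1). I would replace your spectral decomposition of (i) by this computation; everything else in your write-up --- the existence claim via Lemma 2.1, the complementation step yielding (ii), and the final assembly $p(p-q)^D+(1-p)(p-q)^D=(p-q)^D$ --- is correct and matches the paper.
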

\begin{proof}
By Theorem 3.5(5), we have
$$(1-pq)^D=1-p+[(p-q)^D]^2[p+pq(1-p)]+[\sum_{i=0}^{k-1}(p-q)^\pi(p-q)^{2i}]pq(p-1).\eqno(3.4)$$

we replace $p$ and $q$ by $1-p$ and $1-q$ respectively in the equality (3.4) to obtain
$$(p+q-pq)^D=p+[(p-q)^D]^2[1-p+(1-p)(1-q)p]+[\sum_{i=0}^{k-1}(p-q)^\pi(p-q)^{2i}](1-p)(1-q)p.\eqno(3.5)$$

Multiplying the equality (3.4) by $p-pq$ on the right yields $$(1-pq)^D(p-pq)=p(p-q)^D=(p-q)^D(1-q).\eqno(3.6)$$

Multiplying the equality (3.5) by $pq-p$ on the right yields
$$(p+q-pq)^D(pq-q)=(p-q)^Dq. \eqno(3.7)$$

Notice that (3.6) and (3.7). One has
\begin{eqnarray*}
(1-pq)^D(p-pq)+(p+q-pq)^D(pq-q) &=& (p-q)^D(1-q)+(p-q)^Dq\\
&=&(p-q)^D.
\end{eqnarray*}

The proof is complete.
\end{proof}

An involution $x\mapsto x^*$ in a ring $R$ is an anti-isomorphism of degree 2, that is,
 \begin{center}
 $(a^*)^*=a$, $(a+b)^*=a^*+b^*$, $(ab)^*=b^*a^*$
\end{center}
for all $a, b \in R$. Idempotent element $p$ is a \emph{projector} if $p=p^*$. An element $a\in R$ is \emph{$\ast$-cancellable} if
\begin{center}
$a^*ax=0\Rightarrow ax=0$ and $xaa^*=0\Rightarrow xa=0$.
\end{center}
A ring $R$ is \emph{$\ast$-reducing} if all elements in $R$ are $\ast$-cancellable. This is equivalent to $a^*a=0\Rightarrow a=0$ for all $a\in R$.

\begin{theorem} Let $R$ be a $\ast$-reducing ring and $p$, $q$ be two projectors. Then\\
$(1)$ $(p-q)^D=p-q$ if and only if $pq=qp$,\\
$(2)$ If $6\in R^{-1}$, then $(p+q)^D=p+q$ if and only if $pq=qp=0$.
\end{theorem}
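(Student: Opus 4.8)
The plan is to reduce both equivalences to the purely algebraic fact that a self-adjoint element equal to its own Drazin inverse is a tripotent. For any $a\in R$ one has $a^D=a$ if and only if $a^3=a$: taking $b=a$ in the defining conditions of the Drazin inverse, the condition $bab=b$ becomes $a^3=a$, and then $a-a^2b=a-a^3=0$ is automatically nilpotent, so by uniqueness of $a^D$ the equivalence holds (and $a^3=a$ already guarantees $a\in R^D$). Since $p,q$ are projectors, both $p-q$ and $p+q$ are fixed by the involution, hence self-adjoint, so it suffices to analyse when $(p-q)^3=p-q$ and when $(p+q)^3=p+q$.

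For part (1), I would first expand, using $p^2=p$ and $q^2=q$, to get $(p-q)^3-(p-q)=qpq-pqp$; hence $(p-q)^D=p-q$ is equivalent to $pqp=qpq$. The implication $pq=qp\Rightarrow pqp=qpq$ is immediate, both sides reducing to $pq$. For the converse I would introduce $x=pq-qp$, which satisfies $x^*=-x$ because $p,q$ are projectors, and compute $x^*x=qpq+pqp-qpqp-pqpq$. Writing $w:=pqp=qpq$ and using the auxiliary identities $qpqp=w$ and $pqpq=w$ (obtained by multiplying $pqp=qpq$ on the appropriate side by $p$ or $q$), this collapses to $x^*x=2w-w-w=0$. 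As $R$ is $*$-reducing, $x=0$, i.e. $pq=qp$.

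For part (2), the same expansion gives $(p+q)^3-(p+q)=2pq+2qp+pqp+qpq=:S$, so $(p+q)^D=p+q$ is equivalent to $S=0$; the direction $pq=qp=0\Rightarrow S=0$ is trivial. For the converse I would set $u=pq$, so that $u^*=qp$, $uu^*=pqp$, $u^*u=qpq$, and $S=2u+2u^*+uu^*+u^*u$. I would then compress $S$ on both sides by the projectors: sandwiching $pSq$ reduces, after dividing by the invertible scalar $2$, to $u^2=-\tfrac12 u$, while sandwiching $qSq$ gives $5u^*u+u^*u^2=0$. Substituting the first relation into the second yields $\tfrac92\,u^*u=0$, and here the full strength of $6\in R^{-1}$ is used, since the invertibility of $2$ and $3$ (hence of $9/2$) is exactly what is needed. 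Thus $u^*u=0$, and $*$-reducing forces $u=pq=0$; applying the involution gives $qp=(pq)^*=0$.

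The main obstacle in both parts is the converse implication: converting a tripotent-type polynomial identity into an honest vanishing statement. The decisive device is the $*$-reducing hypothesis, which turns a computed expression of the form $x^*x$ (respectively $u^*u$) into $x=0$; the real work is manipulating $pqp=qpq$ (respectively $S=0$) into precisely such a form. In part (2) the extra subtlety is locating the correct one-sided compressions $pSq$ and $qSq$ so that the cross term is eliminated, and verifying that exactly the invertibility of $6$ clears the resulting scalar $9/2$.
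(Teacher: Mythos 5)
Your proposal is correct. The preliminary reduction ($a^D=a$ if and only if $a^3=a$, via uniqueness of the Drazin inverse with $b=a$) is sound, and all the displayed computations check out: $(p-q)^3-(p-q)=qpq-pqp$, $(pq-qp)^*(pq-qp)=pqp+qpq-pqpq-qpqp$, $pSq=2u+4u^2$, $qSq=5u^*u+u^*u^2$, and the invertibility of $\tfrac{9}{2}$ does follow from $6\in R^{-1}$ since $2$ and $3$ are then individually invertible.

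Part (1) is essentially identical to the paper's argument, which likewise reduces to $pqp=qpq$ and then observes $(pq-qp)^*(pq-qp)=0$. In part (2) you diverge from the paper in the intermediate steps: the paper multiplies the identity $2pq+2qp+pqp+qpq=0$ on the left by $p$ and on the right by $q$, subtracts to obtain $3(pqp-qpq)=0$, invokes the invertibility of $3$ to get $pqp=qpq$, feeds this back into part (1) to conclude $pq=qp$, and only then simplifies the identity to $6pq=0$. You instead bypass the commutativity step entirely: the compressions $pSq$ and $qSq$ give $u^2=-\tfrac12 u$ and $5u^*u+u^*u^2=0$ for $u=pq$, which combine to $u^*u=0$ and hence $pq=0$ directly by $*$-reducibility. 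Both routes consume exactly the hypothesis $6\in R^{-1}$; the paper's has the structural advantage of reusing part (1), while yours makes the role of the $*$-reducing hypothesis more uniform across the two parts (in each case the conclusion drops out of a single identity of the form $x^*x=0$).
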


\begin{proof} (1) If $pq=qp$, it is easy to check that $(p-q)^D=p-q$.

Conversely, $(p-q)^D=p-q$ implies that $(p-q)^3=p-q$, that is $pqp=qpq$. Since $R$ is a $\ast$-reducing ring and $(pq-qp)^*(pq-qp)=0$, it follows that $pq=qp$.

(2) If $pq=qp=0$, by [6, Corollary 1], $(p+q)^D=p+q$.

Conversely, $(p+q)^D=p+q$ implies $(p+q)^3=p+q$, i.e.,
$$2pq+2qp+pqp+qpq=0. \eqno(3.8)$$

Multiplying the equality (3.8) by $p$ on the left yields
$$2pq+3pqp+pqpq = 0.\eqno(3.9)$$

Multiplying the equality $(3.8)$ by $q$ on the right yields
$$2pq+3qpq+pqpq=0. \eqno(3.10)$$

Combining the equalities $(3.9)$ and $(3.10)$, we obtain that $pqp=qpq$. By the proof of (1), we get $pq=qp$. Hence, equality $(3.8)$ can be simplified to $6pq=0$. Therefore, $pq=qp=0$.
\end{proof}

Let $p$, $q$ be two idempotents in a Banach algebra. Then, $p+q$ is Drazin invertible if and only if $p-q$ is Drazin invertible [9]. However, in general, this need not be true in a ring. For example, let $R=\mathbb{Z}$ and $p=q=1$. Then $p-q=0$ is Drazin invertible, but $p+q=2$ is not Drazin invertible. Next, we consider what conditions $p$ and $q$ satisfy, $p-q\in R^D$ implies that $p+q\in R^D$. Deng and Wei [5] proved the following results for bounded linear operators in Banach spaces. Now we present it in a ring without proofs.

\begin{theorem} Let $p-q\in R^D$. If $F$, $G$ and $H$ are given by Definition $3.1$ and $(p+q)(p-q)^\pi \in R^{\rm nil}$, then\\
$(1)$ $(p+q)^D=(p-q)^D (p+q)(p-q)^D,$\\
$(2)$ $(p-q)^D=(p+q)^D (p-q)(p+q)^D,$\\
$(3)$  $(p-q)^\pi=(p+q)^\pi,$\\
$(4)$ $(p-q)^D= F+G-H,$\\
$(5)$ $(p+q)^D=(2G-H)(F+G-H).$
\end{theorem}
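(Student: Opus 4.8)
The plan is to pass to the Peirce decomposition cut out by the idempotent $H=(p-q)(p-q)^D$. By Theorem 3.4(1) we have $pH=Hp$, and by Corollary 3.3(3) we have $qH=Hq$; hence $p$, $q$, $p+q$, $p-q$ and $(p-q)^D$ all commute with $H$ and with $(p-q)^\pi=1-H$. Put $e=H$ and $f=(p-q)^\pi$, so $e+f=1$ and $ef=fe=0$. In the corner $eRe$ the element $u=Hp-Hq=H(p-q)$ is invertible with inverse $(p-q)^D$, since $(p-q)(p-q)^D=(p-q)^D(p-q)=H$ and $(p-q)^DH=H(p-q)^D=(p-q)^D$; in the corner $fRf$ the element $f(p-q)=(p-q)(p-q)^\pi$ is nilpotent by the definition of the Drazin inverse. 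The hypothesis $(p+q)(p-q)^\pi\in R^{\rm nil}$ says exactly that the $fRf$-component $f(p+q)=(p+q)(p-q)^\pi$ of $p+q$ is nilpotent.

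The heart of the argument is the invertible corner. Write $v=Hp+Hq=H(p+q)$, so that $u$ and $v$ are the difference and sum of the idempotents $Hp,Hq$ of the unital ring $eRe$. For any two idempotents one has $(p-q)(p+q)+(p+q)(p-q)=2(p-q)$ and $(p+q)^2+(p-q)^2=2(p+q)$; read inside $eRe$ these become $uv+vu=2u$ and $u^2=2v-v^2$. From $uv+vu=2u$ I get $uvu^{-1}=2-v$, hence $vu^{-1}=u^{-1}(2-v)$, and therefore
$$v(u^{-1}vu^{-1})=u^{-1}(2-v)vu^{-1}=u^{-1}(2v-v^2)u^{-1}=u^{-1}u^2u^{-1}=e,$$
with $(u^{-1}vu^{-1})v=e$ the same way, so $v$ is invertible in $eRe$ with $v^{-1}=u^{-1}vu^{-1}$; likewise $vu^{-1}v=u^{-1}(2v-v^2)=u^{-1}u^2=u$ yields $u^{-1}=v^{-1}uv^{-1}$. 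I expect this to be the \emph{main obstacle}, as it is the only step that genuinely uses the algebra of the sum and difference of two idempotents; the rest is bookkeeping.

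Now recombine. Since $p+q=v+f(p+q)$ with $v$ invertible in $eRe$ and $f(p+q)$ nilpotent in $fRf$ and the two summands orthogonal, additivity of the Drazin inverse for orthogonal summands (the device used via [6, Corollary 1] earlier in the paper) gives $p+q\in R^D$ with $(p+q)^D=v^{-1}$ and $(p+q)(p+q)^D=e=H$, so $(p+q)^\pi=1-H=(p-q)^\pi$, which is (3). Lifting $v^{-1}=u^{-1}vu^{-1}$ back to $R$ and using $u^{-1}=(p-q)^D=(p-q)^DH=H(p-q)^D$, the $fRf$ cross terms drop out because $ef=0$, and we obtain $(p+q)^D=(p-q)^D(p+q)(p-q)^D$, i.e. (1); lifting $u^{-1}=v^{-1}uv^{-1}$ the same way gives (2).

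Finally, (4) and (5) are formal consequences of Corollary 3.3 and (1). For (4), $H=(p-q)(p-q)^D=p(p-q)^D-q(p-q)^D=F-q(p-q)^D$, and Corollary 3.3(1) gives $q(p-q)^D=(p-q)^D(1-p)=(p-q)^D-G$; hence $H=F+G-(p-q)^D$, that is $(p-q)^D=F+G-H$ (no nilpotency is needed here). For (5), substitute (4) to rewrite the right-hand side as $(2G-H)(p-q)^D$. Using $H(p-q)^D=(p-q)^D$, Corollary 3.3(2) in the form $(p-q)^Dq=(1-p)(p-q)^D$, and (4) multiplied on the right by $(p-q)^D$, which yields $(F+G)(p-q)^D=((p-q)^D)^2+(p-q)^D$, a direct computation reduces $(2G-H)(p-q)^D$ to $(p-q)^D(p+q)(p-q)^D$; by (1) this equals $(p+q)^D$, proving (5).
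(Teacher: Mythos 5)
Your proof is correct, and it is worth noting that the paper itself offers \emph{no} proof of this theorem: the authors state it ``without proofs,'' merely citing the Banach-space version of Deng and Wei. What you have done is supply the missing ring-theoretic argument, and it is exactly the right algebraization of the operator proof: where Deng and Wei decompose the space along the range and kernel of $(p-q)(p-q)^D$, you use the Peirce decomposition of $R$ with respect to $e=H$ and $f=(p-q)^\pi$, justified by $pH=Hp$ (Theorem 3.4) and $qH=Hq$ (Corollary 3.3). The key step --- showing $v=H(p+q)$ is invertible in $eRe$ with $v^{-1}=u^{-1}vu^{-1}$ from the idempotent identities $uv+vu=2u$ and $u^2=2v-v^2$ --- is sound (read $2$ as $2e$, the unity of the corner ring), and the recombination via Drazin's additivity result for orthogonal summands, i.e.\ [6, Corollary 1], which the paper already uses in Theorems 3.5 and 3.9, correctly yields $(p+q)^D=v^{-1}$, $(p+q)(p+q)^D=H$, and hence (1)--(3). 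Parts (4) and (5) I checked independently: $(2G-H)(p-q)^D=2(p-q)^Dp(p-q)^D-(p-q)^D$ does reduce to $(p-q)^D(p+q)(p-q)^D$ using $p(p-q)^D=(p-q)^D(1-q)$ and $p((p-q)^D)^2=((p-q)^D)^2p$, so the ``direct computation'' you invoke goes through. Your observation that (4) needs no nilpotency hypothesis is also correct and is a small sharpening of the statement as given.
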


\begin{theorem} Let $p-qp\in R^D$. Then \\
$(1)$ $(p-q)^D=(p-q)^2((p-qp)^D-(q-qp)^D),$\\
$(2)$ If $(p+q)(p-q)^\pi\in R^D$, we have $p((p+q)^D-(p-q)^D)(p-q)^2=0$.
\end{theorem}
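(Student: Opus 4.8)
The plan is to reduce everything to the idempotent $H=(p-q)(p-q)^D$ together with the commutation relations of Theorem 3.4 and Corollary 3.3; write $a=p-q$ for brevity. Since $p-qp$ is one of the elements listed in Lemma 2.1, that lemma gives $p-q\in R^D$, and applying it to the pair $(q,p)$ (using $(q-p)^D=-(p-q)^D$) gives $q-qp\in R^D$, so every Drazin inverse below exists and Theorem 3.5 applies. For (1) I would substitute $(p-qp)^D=[(p-q)^D]^3p$ (Theorem 3.5(4)) and $(q-qp)^D=q[(q-p)^D]^3=-q[(p-q)^D]^3$ (Theorem 3.5(3) for the pair $(q,p)$), turning the right-hand side into $(p-q)^2[(p-q)^D]^3p+(p-q)^2q[(p-q)^D]^3$. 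The first term equals $(p-q)^Dp$, since $a^2(a^D)^3=a^D$. For the second term I would check by direct expansion that $q(p-q)^2=(p-q)^2q$; as $((p-q)^2)^D=[(p-q)^D]^2$, the relation [6, Corollary 2] then gives $q[(p-q)^D]^2=[(p-q)^D]^2q$, and moving $q$ across these two factors and using $(p-q)^2[(p-q)^D]^2=H$, $Hq=qH$ (Corollary 3.3(3)) and $q(p-q)^D=(p-q)^D(1-p)$ (Corollary 3.3(1)) collapses it to $(p-q)^D(1-p)$. The two pieces sum to $(p-q)^Dp+(p-q)^D(1-p)=(p-q)^D$, giving (1).

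For (2) I would first note that, since $(p-q)^D(p-q)^2=p-q$, the claim is equivalent to $p(p+q)^D(p-q)^2=p(p-q)$, proved in three steps. Step A secures $(p+q)^D$ (which the statement tacitly assumes). From $pH=Hp$ (Theorem 3.4(1)) and $qH=Hq$ (Corollary 3.3(3)), $p+q$ commutes with $H$, so in the corner ring $HRH$ the elements $P=pH$ and $Q=qH$ are idempotents with $P-Q=p-q$ a unit of inverse $(p-q)^D$. The identity $(P+Q)^2+(P-Q)^2=2(P+Q)$ rewrites as $(P-Q)^2=(P+Q)(2H-(P+Q))$; since $(P-Q)^2$ is invertible in $HRH$ and $P+Q$ commutes with $2H-(P+Q)$, the factor $P+Q=(p+q)H$ is invertible in $HRH$. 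Together with the hypothesis $(p+q)(p-q)^\pi=(p+q)(1-H)\in R^D$ and the orthogonality $(p+q)H\cdot(p+q)(1-H)=0$, this gives $p+q\in R^D$; moreover $(p+q)(p+q)^D=H+w$ with $w$ an idempotent in $(1-H)R(1-H)$, whence $(p+q)^\pi H=0$. Step B: $p+q$ commutes with $(p-q)^2$ by direct expansion, hence so does $(p+q)^D$ by [6, Corollary 2], so $p(p+q)^D(p-q)^2=p(p-q)^2(p+q)^D=(p-pqp)(p+q)^D$, using $p(p-q)^2=p-pqp$. Step C: the elementary identity $(p-pq)(p+q)=p-pqp$ lets me rewrite this as $(p-pq)(p+q)(p+q)^D=(p-pq)(1-(p+q)^\pi)$; finally $p-pq=p(p-q)=p(p-q)H$ together with $(p+q)^\pi H=0$ force $(p-pq)(p+q)^\pi=0$, so the expression equals $p-pq=p(p-q)$, which is (2).

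The main obstacle is Step A. The statement presupposes that $p+q$ is Drazin invertible, and the argument must supply this along with the sharper vanishing $(p+q)^\pi H=0$ needed in Step C. The crux is that $P+Q$ is \emph{genuinely invertible} in the corner $HRH$, not merely Drazin invertible: this is exactly what the quadratic relation $(P-Q)^2=(P+Q)(2H-(P+Q))$ delivers from the invertibility of $P-Q$, and it is what forces the spectral idempotent of $p+q$ to annihilate $H$ and makes the final cancellation legitimate.
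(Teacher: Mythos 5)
Your proof of part (1) is correct and is essentially the paper's own argument: substitute $(p-qp)^D=[(p-q)^D]^3p$ and $(q-qp)^D=-q[(p-q)^D]^3$, use that $q$ commutes with $(p-q)^2$, and finish with $q(p-q)^D=(p-q)^D(1-p)$.

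Part (2), however, has a genuine gap. Your opening reduction uses the identity $(p-q)^D(p-q)^2=p-q$, and Step C uses $p-pq=p(p-q)H$. Both amount to asserting $p(p-q)(p-q)^\pi=0$, i.e.\ that the nilpotent part of $p-pq$ vanishes --- in effect you are assuming ${\rm ind}(p-q)\le 1$, which is not implied by the hypotheses. Concretely, take $p=\left(\begin{smallmatrix}1&1\\0&0\end{smallmatrix}\right)$ and $q=\left(\begin{smallmatrix}1&0\\0&0\end{smallmatrix}\right)$ over $\mathbb{Q}$: these are idempotents, $p-qp=0\in R^D$, and $(p+q)(p-q)^\pi=p+q$ satisfies $(p+q)^2=2(p+q)$, hence is group invertible, so all hypotheses hold. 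But $p-q=\left(\begin{smallmatrix}0&1\\0&0\end{smallmatrix}\right)$ is nilpotent of index $2$, so $(p-q)^D=0$ and $H=0$; thus $(p-q)^D(p-q)^2=0\ne p-q$ and $p(p-q)H=0\ne p-pq$. Your ``equivalent'' target $p(p+q)^D(p-q)^2=p(p-q)$ here reads $0=p-pq$, which is false, while the theorem's actual conclusion holds trivially since $(p-q)^2=0$. The correct common value of $p(p+q)^D(p-q)^2$ and $p(p-q)^D(p-q)^2$ is $p(p-q)^2(p-q)^D=p(p-q)H$, not $p(p-q)$; accordingly, after $(p-pqp)(p+q)^D=(p-pq)(H+w)$ in Step C you would still need $(p-pq)w=p(p-q)(p-q)^\pi w=0$, and nothing in your argument delivers this.

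For comparison, the paper proves (2) by a direct computation from the representation $(p+q)^D=(p-q)^D(p+q)(p-q)^D$ of Theorem 3.10(1), reducing both terms to $p(p-q)^2(p-q)^D$ so that they cancel. Your Step A --- producing $p+q\in R^D$ and $(p+q)^\pi H=0$ by inverting $(p+q)H$ in the corner ring $HRH$ and adding the orthogonal summand $(p+q)(1-H)$ --- is a sound and genuinely useful supplement (note only that $P-Q=(p-q)H$, not $p-q$; it is still invertible in $HRH$ with inverse $(p-q)^D$), and it addresses the fact that Theorem 3.10 is stated under the stronger hypothesis $(p+q)(p-q)^\pi\in R^{\rm nil}$. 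But it does not repair the endgame as written.
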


\begin{proof} Since $(p-qp)^D=((p-q)^D)^3p$ and $(q-qp)^D=q((q-p)^D)^3$, we get
\begin{eqnarray*}
 (p-q)^2((p-qp)^D-(q-qp)^D) &=& (p-q)^2(((p-q)^D)^3p-q((q-p)^D)^3) \\
   &=& (p-q)^Dp+q(p-q)^D\\
   &=& (p-q)^Dp+(p-q)^D(1-p)\\
   &=& (p-q)^D.
  \end{eqnarray*}
(2) Note that $(p+q)^D=(p-q)^D(p+q)(p-q)^D$. We have
\begin{eqnarray*}
 p((p+q)^D-(p-q)^D)(p-q)^2 &=& p(p-q)^D(p+q)(p-q)-p(p-q) \\
   &=& p(1-q+1-p)(p-q)^D(p-q)-p(p-q)^2(p-q)^D\\
   &=& p(p-q)^2(p-q)^D-p(p-q)^2(p-q)^D\\
   &=& 0.
   \end{eqnarray*}
\end{proof}

In Theorem 3.10, we know that if $p-q\in R^D$ and $(p+q)(p-q)^\pi$ is nilpotent, then $p+q\in R^D$. Naturally, does the reverse statement above hold? Next, we give an example to  illustrate it is not true.
Take $R= \mathbb{Z}_7$, \begin{center}
 $p=\left(
                                                                                             \begin{array}{cc}
                                                                                               1 & 0 \\
                                                                                               0 & 1 \\
                                                                                             \end{array}
                                                                                           \right)
 \in M_2(R)$ and $q=\left(
 \begin{array}{cc}
            1 & 0 \\
            0 & 0  \\
          \end{array}
          \right)
\in M_2(R)$,
 \end{center}
 $p$ and $q$ are obvious two idempotents. Moreover, \begin{center}
 $p+q=\left(
 \begin{array}{cc}
                                                   2 & 0 \\
                                                   0 & 1 \\
                                                 \end{array}
                                                 \right)
 $, $p-q=\left(
           \begin{array}{cc}
             0 & 0 \\
             0 & 1 \\
           \end{array}
         \right)
 $.
 \end{center}
Hence, $p+q$ and $p-q$ are Drazin invertible. However, $(p+q)(p-q)^\pi$ is not nilpotent.

Koliha and Rako\v{c}evi\'{c} [10] proved that $p-q\in R^{-1}$ implies that $p+q\in R^{-1}$ for idempotents $p$ and $q$ in a ring $R$. Moreover, if $2\in R^{-1}$, then $p-q\in R^{-1}$ is equivalent to $p+q \in R^{-1}$ and $1-pq\in R^{-1}$. Hence, we have the following result.
\begin{corollary} $[12, {\rm Theorem~2.2}]$ Let $p-q\in R^{-1}$. If $F=p(p-q)^{-1}$ and $G=(p-q)^{-1}p$, then\\
$(1)$ $(p+q)^{-1}=(p-q)^{-1} (p+q)(p-q)^{-1},$\\
$(2)$ $(p-q)^{-1}=(p+q)^{-1}(p-q)(p+q)^{-1},$\\
$(3)$ $(p-q)^{-1}= F+G-1,$\\
$(4)$ $(p+q)^{-1}=(2G-1)(F+G-1)$.
\end{corollary}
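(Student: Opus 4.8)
The plan is to recognize Corollary 3.14 as nothing more than the specialization of Theorem 3.10 to the situation where the Drazin inverses in question are genuine inverses. The entry point is the observation that $p-q\in R^{-1}$ forces $(p-q)^D=(p-q)^{-1}$ and, crucially, that the spectral idempotent collapses: $(p-q)^\pi=1-(p-q)(p-q)^{-1}=0$. Two consequences follow immediately. First, the auxiliary idempotent of Definition 3.1 becomes $H=(p-q)^D(p-q)=(p-q)^{-1}(p-q)=1$, so that $F$ and $G$ in the corollary agree with those of Definition 3.1. Second, the hypothesis $(p+q)(p-q)^\pi\in R^{\rm nil}$ of Theorem 3.10 holds trivially, since $(p+q)(p-q)^\pi=0$. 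Hence all five conclusions of Theorem 3.10 are at our disposal.

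The next step is to promote the Drazin inverse of $p+q$ to an ordinary inverse. Here I would invoke Theorem 3.10(3), which gives $(p+q)^\pi=(p-q)^\pi=0$; this says precisely $(p+q)(p+q)^D=1$, and together with $(p+q)^D(p+q)=1$ it shows $p+q\in R^{-1}$ with $(p+q)^{-1}=(p+q)^D$. (Alternatively one may quote the Koliha--Rako\v{c}evi\'{c} result recalled just before the corollary, that $p-q\in R^{-1}$ implies $p+q\in R^{-1}$.) With both $p-q$ and $p+q$ now invertible, every Drazin inverse occurring in Theorem 3.10 may be replaced by the corresponding ordinary inverse.

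It then remains only to read off the four assertions by substitution. Putting $(p-q)^D=(p-q)^{-1}$, $(p+q)^D=(p+q)^{-1}$ and $H=1$ into Theorem 3.10(1), (2), (5) and (4) yields, respectively, parts (1), (2), (4) and (3) of the corollary; in particular $(p-q)^D=F+G-H$ becomes $(p-q)^{-1}=F+G-1$, and $(p+q)^D=(2G-H)(F+G-H)$ becomes $(p+q)^{-1}=(2G-1)(F+G-1)$. I do not expect a genuine obstacle here: the only point requiring care is ensuring that Theorem 3.10 supplies a true inverse of $p+q$ rather than merely a Drazin inverse, and this is exactly what the vanishing of the common spectral idempotent $(p+q)^\pi=(p-q)^\pi=0$ guarantees. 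Everything else is the routine propagation of $(p-q)^\pi=0$ and $H=1$ through formulae already established.
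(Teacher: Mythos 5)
Your proposal is correct and is exactly the derivation the paper intends: the corollary is stated as an immediate specialization of Theorem 3.10 (the paper gives no separate proof), and your observations that $(p-q)^\pi=0$, $H=1$, and $(p+q)^\pi=(p-q)^\pi=0$ (so that $(p+q)^D$ is a true inverse) are precisely the points needed to read off all four formulae.
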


\centerline {\bf ACKNOWLEDGMENTS} This research is supported by the National Natural Science Foundation of China (10971024),
the Specialized Research Fund for the Doctoral Program of Higher Education (20120092110020),
and the Natural Science Foundation of Jiangsu Province (BK2010393) and the Foundation of Graduate
Innovation Program of Jiangsu Province(CXLX13-072).
\bigskip

\end{document}